\newtheorem{theorem}{Theorem}[section]
\newtheorem{lemma}{Lemma}[section]
\newtheorem{remark}{Remark}[section]
\theoremstyle{definition}
\numberwithin{equation}{section}
\begin{document}
\begin{CJK*}{GBK}{song}
\title{\bf On the asymptotic expansions of
products related to  the Wallis, Weierstrass, and  Wilf formulas   
       }
\author{ Chao-Ping Chen$^{a}$\thanks{Corresponding Author.  } \,\, and Richard B. Paris$^{b}$ \\
 $^{a}$School of Mathematics and Informatics, Henan Polytechnic University,\\
 Jiaozuo City 454000, Henan Province, China\\
Email: chenchaoping@sohu.com }
         \date{$^{b}$Division of Computing and Mathematics,\\
 University of Abertay, Dundee, DD1 1HG, UK\\
Email: R.Paris@abertay.ac.uk}        

\maketitle

\noindent {\bf Abstract:}
For  all integers $n\geq1$, let
\begin{align*}
 W_n(p,q)=\prod_{j=1}^{n}\left\{e^{-p/j}\left(1+\frac{p}{j}+\frac{q}{j^2}\right)\right\}
\end{align*}
and
\begin{align*}
R_n(p, q)=\prod_{j=1}^{n}\left\{e^{-p/(2j-1)}\left(1+\frac{p}{2j-1}+\frac{q}{(2j-1)^2}\right)\right\},
\end{align*}
where $p$, $q$ are complex parameters.
The infinite product $W_{\infty}(p,q)$  includes the Wallis and  Wilf formulas, and also the infinite product definition of Weierstrass for the gamma function,  as
special cases.  In this paper, we present  asymptotic expansions of $W_n(p,q)$ and $R_n(p, q)$ as $n\to\infty$. In addition, we also establish asymptotic expansions for the Wallis sequence.\\

\noindent{\textit{\bf  2010 Mathematics Subject Classification:}} 33B15, 41A60\\

\noindent{\textit{\bf Keywords:}}  Gamma function, Psi function, Euler--Mascheroni constant, Asymptotic expansion, Wallis sequence

\begin{CJK*}{GBK}{kai}
\end{CJK*}

\section{Introduction}

The famous Wallis sequence $W_n$, defined by
\begin{equation}\label{Wallis-sequence}
        W_{n}=\prod_{k=1}^{n}\frac{4k^2}{4k^2-1}\qquad
        (n\in\mathbb{N}:=\{1,2,3,\ldots\}),
    \end{equation}
has the limiting value
 \begin{equation}\label{Wallis-infinite-product}
W_{\infty}=\prod_{k=1}^{\infty}\frac{4k^2}{4k^2-1}=\frac{\pi}{2}
\end{equation}
established by Wallis in 1655;
see \cite[p. 68]{Berggren-Borwein-Borwein}. Several elementary
proofs of this well-known result can be found in
\cite{AmdeberhanEspinosaMollStraub618--632, Miller740--745,
Wastlund914--917}. An interesting geometric construction that produces
the above limiting value can be found in \cite{Myerson279-280}. Many
formulas  exist for the representation of $\pi$, and a collection of
these formulas is listed \cite{Sofo184--189,SofoJIPAM2005}. For more
history of $\pi$ see \cite{Berggren-Borwein-Borwein,Beckmann1971,
Dunham1990,Agarwal-Agarwal-Sen-Birth}.

The following infinite product definition for the gamma function is due to  Weierstrass (see, for example,
 \cite[p. 255, Entry (6.1.3)]{abram}):
\begin{equation}\label{Weierstrass-formula}
 \frac{1}{\Gamma(z)}=ze^{\gamma
    z}\prod_{n=1}^{\infty}\left\{e^{-z/n}\left(1+\frac{z}{n}\right)\right\},
\end{equation}
where $\gamma$ denotes the Euler--Mascheroni constant defined by
\begin{equation*}
\gamma:=\lim_{n \rightarrow \infty}\left(\sum_{k=1}^{n}\frac{1}{k}-\ln
n\right)=0.5772156649\ldots.
\end{equation*}

In 1997, Wilf \cite{Wilf456} posed the following elegant infinite
product formula as a problem:
\begin{equation}\label{Wilf-Problem}
\prod_{j=1}^{\infty}\left\{e^{-1/j}\left(1+\frac{1}{j}+\frac{1}{2j^2}\right)\right\}=\frac{e^{\pi/2}+e^{-\pi/2}}{\pi
e^{\gamma}},
\end{equation}
which contains three of the most important mathematical constants, namely $\pi$, $e$ and $\gamma$. Subsequently, Choi and
Seo \cite{Choi649--652} proved \eqref{Wilf-Problem}, together with
three other similar product formulas, by making use of well-known
infinite product formulas for the circular and hyperbolic functions
and the familiar Stirling formula for the factorial function.

In 2003, Choi \textit{et al.} \cite{Choi44--48} presented the following two
general infinite product formulas, which include Wilf's formula
(\ref{Wilf-Problem}) and other similar formulas in
\cite{Choi649--652} as special cases:
\begin{equation}\label{Choi-L-H-1}
\prod_{j=1}^{\infty}\left\{e^{-1/j}\left(1+\frac{1}{j}+\frac{\alpha^2+1/4}{j^2}\right)\right\}
=\frac{2(e^{\pi\alpha}+e^{-\pi\alpha})}{(4\alpha^2+1)\pi
e^{\gamma}}\qquad \left(\alpha\in \mathbb{C};\,\,\,
\alpha\not=\pm\frac{1}{2}i\right)
\end{equation}
and
\begin{equation}\label{Choi-L-H-2}
\prod_{j=1}^{\infty}\left\{e^{-2/j}\left(1+\frac{2}{j}+\frac{\beta^2+1}{j^2}\right)\right\}
=\frac{e^{\pi\beta}-e^{-\pi\beta}}{2\beta(\beta^2+1)\pi e^{2\gamma}}
\qquad  \left(\beta\in \mathbb{C}\setminus\{0\};\,\,\, \beta\not=\pm
i\right),\end{equation}
where $i=\sqrt{-1}$ and $\mathbb{C}$ denotes the
set of complex numbers. In 2013,      Chen and Choi
\cite{Chen-Choi357--363} presented a more general infinite product
formula that included the formulas \eqref{Choi-L-H-1} and \eqref{Choi-L-H-2} as
special cases:
\begin{equation}\label{lnA-pqnew0}
\prod_{j=1}^{\infty}\left\{e^{-p/j}\left(1+\frac{p}{j}+\frac{q}{j^2}\right)\right\}=\frac{e^{-p\gamma}}{\Gamma\left(1+\frac{1}{2}p+\frac{1}{2}\Delta\right)
\Gamma\left(1+\frac{1}{2}p-\frac{1}{2}\Delta\right)}
\end{equation}
and also
another interesting infinite product formula:
\begin{equation}\label{Thm-lnBpq}
\prod_{j=1}^{\infty}\left\{e^{-p/(2j-1)}\left(1+\frac{p}{2j-1}+\frac{q}{(2j-1)^2}\right)\right\} =\dfrac{2^{-p}\,\pi e^{-p\gamma/2}}{\Gamma\left(\frac{1}{2}+\frac{1}{4}p+\frac{1}{4}\Delta\right)
\Gamma\left(\frac{1}{2}+\frac{1}{4}p-\frac{1}{4}\Delta\right)},
\end{equation}
where $p,\, q\in \mathbb{C}$ and $\Delta:=\sqrt{p^2-4q}$.

The formula \eqref{lnA-pqnew0} can be seen to include the formulas \eqref{Wallis-infinite-product}--\eqref{Choi-L-H-2} as special cases. By setting $(p, q)=(0, -1/4)$ in \eqref{lnA-pqnew0}, we have
  \begin{equation}\label{q=-1/4}
\prod_{j=1}^{\infty}\left(1-\frac{1}{4\,j^2}  \right)=\frac{2}{\pi},
\end{equation}
whose reciprocal becomes the  Wallis product \eqref{Wallis-infinite-product}.
Also setting $q=0$  in \eqref{lnA-pqnew0}, we obtain
\begin{equation}\label{q=0}
\prod_{j=1}^{\infty}\left\{e^{-p/j}\left(1+\frac{p}{j}\right)\right\}=\frac{e^{-p\gamma}}{\Gamma(p+1)}.
\end{equation}
Noting that $\Gamma(z+1)=z\Gamma(z)$ and replacing $p$ by $z$ in \eqref{q=0} we recover the Weierstrass formula \eqref{Weierstrass-formula}.
 Setting $(p, q)=(1, 1/2)$ in \eqref{lnA-pqnew0} yields the Wilf formula \eqref{Wilf-Problem} and
setting
\begin{equation}\label{Choi-Lee-Srivastava}
(p, q)=\left(1, \alpha^2+\frac{1}{4}\right)\quad \text{and} \quad (p,q)=\left(2, \beta^2+1\right)
\end{equation}
in \eqref{lnA-pqnew0} yields the formulas \eqref{Choi-L-H-1} and \eqref{Choi-L-H-2}, respectively.

With $(p, q)=(-1, 1/4)$ in  \eqref{lnA-pqnew0}, we obtain the beautiful infinite product formula expressed in terms of
the most important constants $\pi$, $e$ and $\gamma$, namely
\begin{equation}\label{q=1/4}
\prod_{j=1}^{\infty}\left\{e^{1/j}\left(1-\frac{1}{2j} \right)^2\right\}=\frac{e^{\gamma}}{\pi}.
\end{equation}
Also worthy of note are the infinite products that result from
setting $(p, q)=\left(-2, 0\right)$ and  $(p, q)=\left(2,0 \right)$
in \eqref{Thm-lnBpq} to yield respectively
  \begin{equation}\label{lnBpq-1}
  \prod_{j=1}^{\infty}\, \left\{e^{2/(2j-1)}\left(1-\frac{2}{2j-1}\right)\right\}=-2e^{\gamma}
  \end{equation}
 and
  \begin{equation}\label{lnBpq-2}
 \prod_{j=1}^{\infty}\, \left\{e^{-2/(2j-1)}\left(1+\frac{2}{2j-1}\right)\right\}=\frac{1}{2e^{\gamma}}.
  \end{equation}

  \begin{remark}
The constant $e^\gamma$ is important in number theory  and equals the following limit, where $p_n$ is the $n$th prime number:
\begin{equation*}
e^\gamma=\lim_{n\to\infty}\frac{1}{\ln p_n}\prod_{j=1}^{n}\frac{p_j}{p_j-1}.\]
This restates the third of Mertens' theorems (see \cite{Mertens}). The numerical value of $e^\gamma$  is:
\begin{equation*}
e^\gamma=1.7810724179\ldots.
\end{equation*}
There is the curious radical representation
\begin{equation}\label{product-Ser}
e^\gamma=\left(\frac{2}{1}\right)^{1/2}\left(\frac{2^2}{1\cdot3}\right)^{1/3}\left(\frac{2^3\cdot4}{1\cdot3^3}\right)^{1/4}\left(\frac{2^4\cdot4^4}{1\cdot3^6\cdot5}\right)^{1/5}\cdots,
\end{equation}
where the $n$th factor is
\begin{equation*}
\left(\prod_{k=0}^{n}(k+1)^{(-1)^{k+1}(\tiny{\!\!\begin{array}{c}n\\k\end{array}\!\!)}}\right)^{1/(n+1)}.
\end{equation*}
 The  product \eqref{product-Ser}, first discovered  in 1926 by Ser \cite{Ser1075--1077},  was rediscovered in \cite{Sondow2003,Sondow729--734,Guillera-Sondow247--270}.
\end{remark}

Recently, Chen and Paris \cite{Chen-Paris547--551} generalized
the formula \eqref{lnA-pqnew0}  to include $m$ parameters
$(p_1, \ldots, p_m)$. Subsequently,  Chen and Paris \cite{Chen-Paris-1118627}  considered the asymptotic expansion of products related to generalization of the Wilf problem.
However, these authors did not give a general formula for the coefficients in their expansions.

For $n\in\mathbb{N}$, let
\begin{equation}\label{Wpst}
 W_n(p,q)=\prod_{j=1}^{n}\left\{e^{-p/j}\left(1+\frac{p}{j}+\frac{q}{j^2}\right)\right\}
\end{equation}
and
\begin{equation}\label{Thm2-Cn}
R_n(p, q)=\prod_{j=1}^{n}\left\{e^{-p/(2j-1)}\left(1+\frac{p}{2j-1}+\frac{q}{(2j-1)^2}\right)\right\},
\end{equation}
where $p$ and $q$ are complex parameters.
In this paper, we present  asymptotic expansions of $W_n(p,q)$ and $R_n(p, q)$ as $n\to\infty$, including recurrence relations for the coefficients in these expansions. Furthermore, we establish  asymptotic expansions for the  Wallis sequence $W_n$.

\section{Asymptotic expansions of $W_n(p,q)$ and $R_n(r, s)$}

It was established by Chen and Choi
\cite{Chen-Choi357--363} that the finite products $W_n(p,q)$ and $R_n(p,q)$ defined in \eqref{Wpst} and \eqref{Thm2-Cn} can be expressed in the following closed form
\begin{equation}\label{Thm-lnApqProof}
 W_n(p,q)=\frac{e^{-p(\psi(n+1)+\gamma)}\Gamma\left(n+1+\frac{1}{2}p+\frac{1}{2}\Delta\right)
\Gamma\left(n+1+\frac{1}{2}p-\frac{1}{2}\Delta\right)}{\big(\Gamma(n+1)\big)^{2}\Gamma\left(1+\frac{1}{2}p+\frac{1}{2}\Delta\right)
\Gamma\left(1+\frac{1}{2}p-\frac{1}{2}\Delta\right)}
\end{equation}
and
\begin{equation}\label{Rnrs}
R_n(p, q)=\frac{e^{-\frac{p}{2}\big(\psi(n+\frac{1}{2})+\gamma+2\ln2\big)}\Gamma\left(n+\frac{1}{2}+\frac{1}{4}p+\frac{1}{4}\Delta\right)
\Gamma\left(n+\frac{1}{2}+\frac{1}{4}p-\frac{1}{4}\Delta\right)\pi}{\big(\Gamma(n+\frac{1}{2})\big)^{2}\Gamma\left(\frac{1}{2}+\frac{1}{4}p+\frac{1}{4}\Delta\right)
\Gamma\left(\frac{1}{2}+\frac{1}{4}p-\frac{1}{4}\Delta\right)},
\end{equation}
where  $\psi (z)$ denotes   the psi (or digamma) function,  defined by
\begin{equation*}
 \psi (z)= \frac{\textup{d}}{\textup{d}z}\{ \ln \, \Gamma (z)\}=\frac{\Gamma' (z)}{\Gamma (z)}.
\end{equation*}
 We observe that
allowing $n\to\infty$ in \eqref{Thm-lnApqProof} and \eqref{Rnrs}, respectively,  yields \eqref{lnA-pqnew0} and \eqref{Thm-lnBpq}.

Define the function $f(z)$ by
\begin{equation}\label{Thm-lnApqProofre}
f(z):=\frac{e^{-\lambda\psi(z)}\Gamma\left(z+\mu\right)
\Gamma\left(z+\nu\right)}{\big(\Gamma(z)\big)^{2}},
\end{equation}
where $\lambda,\, \mu,\, \nu\in \mathbb{C}$.
It is well known that the logarithm of the gamma function has the asymptotic expansion (see
\cite[p. 32]{Luke1969}):
\begin{equation}\label{asymptotic-Luke-P32}
\ln\Gamma(z+a)\sim\left(z+a-\frac{1}{2}\right)\ln z-z+\frac{1}{2}\ln(2\pi)
+\sum_{n=1}^{\infty}\frac{(-1)^{n+1}B_{n+1}(a)}{n(n+1)}\frac{1}{z^n}
\end{equation}
for $z\to\infty$ in $|\arg\,z|<\pi$,
where $B_n(t)$ denote the Bernoulli polynomials
defined by the following generating function:
\begin{equation*}
\frac{ze^{tz}}{e^{z}-1}=\sum_{n=0}^{\infty}B_{n}(t)\frac{z^n}{n!}.
\end{equation*}
Note that the Bernoulli numbers $B_n$ $ (n\in
\mathbb{N}_0:=\mathbb{N}\cup \{0\})$ are defined by $B_n:=B_n(0)$.
The psi function has the asymptotic expansion (see
\cite[p. 33]{Luke1969}):
\begin{equation}\label{exp-psix+1thm1reobtain}
\psi(z)\sim\ln z-\frac{1}{z}-\sum_{j=1}^{\infty}\frac{B_{j}}{jz^{j}} \qquad (z\to\infty; \ |\arg z|<\pi).
\end{equation}

Using \eqref{asymptotic-Luke-P32} and \eqref{exp-psix+1thm1reobtain}, we then find that
\begin{equation*}
\ln f(z)\sim (\mu+\nu-\lambda)\ln z+\sum_{j=1}^{\infty}\frac{a_j}{z^j}
\end{equation*}
or
\begin{equation}\label{Thm-lnApqProofref-with}
 f(z)\sim z^{\mu+\nu-\lambda}\exp\left(\sum_{j=1}^{\infty}\frac{a_j}{z^j}\right)
\end{equation}
for $z\to\infty$ in $|\arg\,z|<\pi$, where the coefficients $a_j\equiv a_j(\lambda, \mu, \nu)$ are given by
\begin{align}\label{Thm-lnApqProofrelnf-with}
a_1&=\frac{\lambda+B_{2}(\mu)+B_{2}(\nu)-2B_{2}}{2},\quad
a_j=\frac{\lambda B_{j}}{j}+\frac{(-1)^{j+1}\Big(B_{j+1}(\mu)+B_{j+1}(\nu)-2B_{j+1}\Big)}{j(j+1)} \quad (j\geq2).
\end{align}

The choice
\begin{equation*}
(\lambda, \mu, \nu)=\left(p, \,\, \frac{1}{2}p+\frac{1}{2}\Delta,\,\, \frac{1}{2}p-\frac{1}{2}\Delta\right),
\end{equation*}
where  $\mu+\nu-\lambda=0$, leads to the first few coefficients $a_j(p, q)$ given by:
\begin{align*}
a_1(p, q)&=\frac{1}{2}p^2-q,\\
a_2(p, q)&=-\frac{1}{6}p^3+\frac{1}{2}pq+\frac{1}{4}p^2-\frac{1}{2}q,\\
a_3(p, q)&=\frac{1}{12}p^4-\frac{1}{3}p^2q+\frac{1}{6}q^2-\frac{1}{6}p^3+\frac{1}{2}pq+\frac{1}{12}p^2-\frac{1}{6}q.
\end{align*}
From \eqref{Thm-lnApqProof} and \eqref{Thm-lnApqProofref-with}, we obtain the following

\begin{theorem}
As $n\to\infty$, we have
\begin{equation}\label{Thm-lnApqProofre}
  W_n(p,q)\sim\frac{e^{-p\gamma}}{\Gamma\left(1+\frac{1}{2}p+\frac{1}{2}\Delta\right)
\Gamma\left(1+\frac{1}{2}p-\frac{1}{2}\Delta\right)}\exp\left(\sum_{j=1}^{\infty}\frac{a_j(p, q)}{(n+1)^j}\right),
\end{equation}
where  the coefficients $a_j(p, q)$ are given by
\begin{align}\label{Thm-lnApqProofrelnf-with}
a_1(p, q)&=\frac{1}{2}p^2-q\quad \mbox{and}\nonumber\\
a_j(p, q)&=\frac{pB_{j}}{j}+\frac{(-1)^{j+1}\Big(B_{j+1}(\frac{1}{2}p+\frac{1}{2}\Delta)+B_{j+1}(\frac{1}{2}p-\frac{1}{2}\Delta)-2B_{j+1}\Big)}{j(j+1)} \qquad (j\geq2).
\end{align}
Thus we have the expansion
\begin{align}
  W_n(p,q)\sim&\frac{e^{-p\gamma}}{\Gamma\left(1+\frac{1}{2}p+\frac{1}{2}\Delta\right)
\Gamma\left(1+\frac{1}{2}p-\frac{1}{2}\Delta\right)}\nonumber\\
&\times\exp\bigg(\frac{\frac{1}{2}p^2-q}{n+1}+\frac{-\frac{1}{6}p^3+\frac{1}{2}pq+\frac{1}{4}p^2-\frac{1}{2}q}{(n+1)^2}\nonumber\\
&+\frac{\frac{1}{12}p^4-\frac{1}{3}p^2q+\frac{1}{6}q^2-\frac{1}{6}p^3+\frac{1}{2}pq+\frac{1}{12}p^2-\frac{1}{6}q}{(n+1)^3}+\cdots\bigg)\label{Thm-lnApqProofreNamelypq}
\end{align}
as $n\rightarrow\infty$.
\end{theorem}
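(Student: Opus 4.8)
The plan is to extract the expansion directly from the closed form \eqref{Thm-lnApqProof}, feeding into it the standard asymptotic series \eqref{asymptotic-Luke-P32} for $\ln\Gamma$ and \eqref{exp-psix+1thm1reobtain} for $\psi$. Writing $\mu=\tfrac12 p+\tfrac12\Delta$ and $\nu=\tfrac12 p-\tfrac12\Delta$, so that $\mu+\nu=p$ and $\mu\nu=q$, the identity \eqref{Thm-lnApqProof} can be recast as
\begin{equation*}
W_n(p,q)=\frac{e^{-p\gamma}}{\Gamma(1+\mu)\Gamma(1+\nu)}\,f(n+1),\qquad
f(z)=\frac{e^{-p\psi(z)}\,\Gamma(z+\mu)\,\Gamma(z+\nu)}{\big(\Gamma(z)\big)^{2}},
\end{equation*}
where the prefactor is exactly the infinite product $W_\infty(p,q)$ of \eqref{lnA-pqnew0}. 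Hence it is enough to prove \eqref{Thm-lnApqProofref-with} for this $f$ with $(\lambda,\mu,\nu)=(p,\mu,\nu)$ and then set $z=n+1$.

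Next I would take logarithms, $\ln f(z)=-p\,\psi(z)+\ln\Gamma(z+\mu)+\ln\Gamma(z+\nu)-2\ln\Gamma(z)$, and substitute the two expansions. The logarithmic and constant terms cancel: the three $\ln\Gamma$ contributions produce $(\mu+\nu)\ln z$, which is killed by the $-p\ln z$ coming from $-p\psi(z)$ because $\mu+\nu=p$, while the $-z$ and $\tfrac12\ln(2\pi)$ terms cancel across the three gamma functions. For the coefficient of $z^{-j}$, the $\ln\Gamma$ terms give $(-1)^{j+1}\big(B_{j+1}(\mu)+B_{j+1}(\nu)-2B_{j+1}\big)/(j(j+1))$ by \eqref{asymptotic-Luke-P32}, and $-p\psi(z)$ contributes $p/z$ at order $z^{-1}$ together with $pB_j/(jz^j)$ for every $j\geq1$ by \eqref{exp-psix+1thm1reobtain}. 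The delicate point is the $j=1$ term of this last sum, namely $pB_1/z=-p/(2z)$, which combines with the explicit $p/z$ to give $\tfrac12 p$ at order $z^{-1}$, producing the stated value of $a_1$; for $j\geq2$ (where $B_j=0$ for odd $j$) one reads off exactly the formula for $a_j$ in \eqref{Thm-lnApqProofrelnf-with}. Exponentiating the resulting asymptotic series is legitimate because it has no constant or positive‑power term, so $f(z)\sim\exp\big(\sum_{j\geq1}a_j z^{-j}\big)$; restoring the prefactor and putting $z=n+1$ yields \eqref{Thm-lnApqProofre}.

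It remains to see that each $a_j$ is a polynomial in $p$ and $q$ and to compute the first three. By Newton's identities the power sums $\mu^{k}+\nu^{k}$ are polynomials in the elementary symmetric functions $\mu+\nu=p$ and $\mu\nu=q$; since $B_{j+1}(\mu)+B_{j+1}(\nu)$ is a linear combination of such power sums, it — and therefore $a_j(p,q)$ — is a polynomial in $p,q$. Using $B_2(t)=t^2-t+\tfrac16$, $B_3(t)=t^3-\tfrac32 t^2+\tfrac12 t$, $B_4(t)=t^4-2t^3+t^2-\tfrac1{30}$ together with $B_2=\tfrac16$, $B_3=0$, $B_4=-\tfrac1{30}$, a short computation gives $a_1(p,q)=\tfrac12 p^2-q$, $a_2(p,q)=-\tfrac16 p^3+\tfrac12 pq+\tfrac14 p^2-\tfrac12 q$ and $a_3(p,q)=\tfrac1{12}p^4-\tfrac13 p^2 q+\tfrac16 q^2-\tfrac16 p^3+\tfrac12 pq+\tfrac1{12}p^2-\tfrac16 q$, which inserted into \eqref{Thm-lnApqProofre} is precisely the displayed expansion \eqref{Thm-lnApqProofreNamelypq}.

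The main obstacle is purely one of bookkeeping: there is no real analytic difficulty once \eqref{asymptotic-Luke-P32} and \eqref{exp-psix+1thm1reobtain} are available, but one must be careful when merging the two series — in particular with the $B_1=-\tfrac12$ term of the $\psi$‑expansion, which interacts with the separate $-1/z$ term — and one should note explicitly that adding asymptotic (Poincaré) expansions and exponentiating the result are valid operations here, and that the symmetry in $\pm\Delta$ (equivalently, dependence only on $p$ and $q$) is what keeps the coefficients polynomial.
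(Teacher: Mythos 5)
Your proposal is correct and follows essentially the same route as the paper: it starts from the Chen--Choi closed form \eqref{Thm-lnApqProof}, applies the standard asymptotic expansions \eqref{asymptotic-Luke-P32} and \eqref{exp-psix+1thm1reobtain} to the auxiliary function $f(z)=e^{-\lambda\psi(z)}\Gamma(z+\mu)\Gamma(z+\nu)/(\Gamma(z))^2$ with $(\lambda,\mu,\nu)=(p,\tfrac12 p+\tfrac12\Delta,\tfrac12 p-\tfrac12\Delta)$, and reads off the coefficients after exponentiation. Your careful handling of the $B_1$ term merging with the explicit $-1/z$ in the $\psi$-expansion reproduces exactly the paper's formula $a_1=\tfrac12(\lambda+B_2(\mu)+B_2(\nu)-2B_2)$, so there is nothing to add.
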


\noindent{\bf Remark 2.1}\ \
Note that since $W_n=1/W_n(0,-\frac{1}{4})$, it follows by setting $(p,q)=(0,-\frac{1}{4})$ in \eqref{Thm-lnApqProofreNamelypq} that
\begin{equation}\label{Thm-p=0-q--1/4}
  W_n\sim\frac{\pi}{2}\exp\left(-\frac{1}{4(n+1)}-\frac{1}{8(n+1)^2}-\frac{5}{96(n+1)^3}-\cdots\right)
\end{equation}
as $n\rightarrow\infty$.

The same procedure with the choice
\begin{equation*}
(\lambda, \mu, \nu)=\left(\frac{1}{2}p, \,\, \frac{1}{4}p+\frac{1}{4}\Delta,\,\, \frac{1}{4}p+\frac{1}{4}\Delta\right)
\end{equation*}
in \eqref{Rnrs} and \eqref{Thm-lnApqProofref-with} leads to the following
\begin{theorem}$\!\!\!.$
As $n\to\infty$, we have
\begin{equation}\label{Thm2-lnArs}
R_n(p, q)\sim\frac{2^{-p}\pi e^{-p\gamma/2}}{\Gamma\left(\frac{1}{2}+\frac{1}{4}p+\frac{1}{4}\Delta\right)
\Gamma\left(\frac{1}{2}+\frac{1}{4}p-\frac{1}{4}\Delta\right)}\exp\left(\sum_{j=1}^{\infty}\frac{b_j(p, q)}{(n+\frac{1}{2})^j}\right),
\end{equation}
where  the coefficients $b_j(p, q)$ are given by
\begin{align}\label{Thm-lnApqProofrelnf-withb}
b_1(r, s)&=\frac{1}{8}p^2-\frac{1}{4}q\quad \mbox{and}\nonumber\\
b_j(r, s)&=\frac{p B_{j}}{2j}+\frac{(-1)^{j+1}\Big(B_{j+1}(\frac{1}{4}p+\frac{1}{4}\Delta)+B_{j+1}(\frac{1}{4}p-\frac{1}{4}\Delta)-2B_{j+1}\Big)}{j(j+1)} \qquad (j\geq2).
\end{align}
Thus we have the expansion
\begin{align}
R_n(p, q)\sim&\frac{2^{-p}\pi e^{-p\gamma/2}}{\Gamma\left(\frac{1}{2}+\frac{1}{4}p+\frac{1}{4}\Delta\right)
\Gamma\left(\frac{1}{2}+\frac{1}{4}p-\frac{1}{4}\Delta\right)}\nonumber\\
&\times\exp\bigg(\frac{\frac{1}{8}p^2-\frac{1}{4}q}{n+\frac{1}{2}}+\frac{-\frac{1}{48}p^3+\frac{1}{16}pq+\frac{1}{16}p^2-\frac{1}{8}q}{(n+\frac{1}{2})^2}\nonumber\\
&+\frac{\frac{1}{192}p^4-\frac{1}{48}p^2q+\frac{1}{96}q^2-\frac{1}{48}p^3+\frac{1}{16}pq+\frac{1}{48}p^2-\frac{1}{24}q}{(n+\frac{1}{2})^3}+\cdots\bigg)\label{Thm-lnApqbj}
\end{align}
as $n\rightarrow\infty$.
\end{theorem}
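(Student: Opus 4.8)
The plan is to mirror exactly the derivation already carried out for Theorem~2.1, using the closed-form representation \eqref{Rnrs} for $R_n(p,q)$ in place of \eqref{Thm-lnApqProof}. First I would observe that \eqref{Rnrs} can be written as $R_n(p,q)=\pi\,f(n+\tfrac12)/\big(\Gamma(\tfrac12+\tfrac14p+\tfrac14\Delta)\Gamma(\tfrac12+\tfrac14p-\tfrac14\Delta)\big)$ with the auxiliary function $f$ of \eqref{Thm-lnApqProofre} evaluated at the argument $z=n+\tfrac12$ and with the parameter choice $(\lambda,\mu,\nu)=\big(\tfrac12p,\ \tfrac14p+\tfrac14\Delta,\ \tfrac14p-\tfrac14\Delta\big)$; here one has to be slightly careful, since \eqref{Rnrs} contains the extra exponential factor $e^{-\frac p2(\gamma+2\ln2)}=2^{-p}e^{-p\gamma/2}$ (the $\psi(n+\tfrac12)$ part is absorbed into the $e^{-\lambda\psi(z)}$ of $f$), which is precisely the constant prefactor $2^{-p}\pi e^{-p\gamma/2}/\big(\Gamma\Gamma\big)$ appearing in \eqref{Thm2-lnArs}. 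One also notes that with this choice $\mu+\nu-\lambda=\tfrac14p+\tfrac14p-\tfrac12p=0$, so the power-of-$z$ factor $z^{\mu+\nu-\lambda}$ in \eqref{Thm-lnApqProofref-with} is trivial and contributes nothing.

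Next I would simply invoke the general asymptotic expansion \eqref{Thm-lnApqProofref-with}--\eqref{Thm-lnApqProofrelnf-with}, which was established once and for all for the function $f(z)$ via \eqref{asymptotic-Luke-P32} and \eqref{exp-psix+1thm1reobtain}. Substituting $z=n+\tfrac12$ and the above $(\lambda,\mu,\nu)$ directly yields
\[
R_n(p,q)\sim\frac{2^{-p}\pi e^{-p\gamma/2}}{\Gamma\left(\tfrac12+\tfrac14p+\tfrac14\Delta\right)\Gamma\left(\tfrac12+\tfrac14p-\tfrac14\Delta\right)}\exp\!\left(\sum_{j=1}^{\infty}\frac{b_j(p,q)}{(n+\tfrac12)^j}\right),
\]
with $b_j(p,q)=a_j\big(\tfrac12p,\ \tfrac14p+\tfrac14\Delta,\ \tfrac14p-\tfrac14\Delta\big)$. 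Reading off $a_1$ from \eqref{Thm-lnApqProofrelnf-with} gives $b_1=\tfrac12\big(\tfrac12p+B_2(\tfrac14p+\tfrac14\Delta)+B_2(\tfrac14p-\tfrac14\Delta)-2B_2\big)$, and using $B_2(t)=t^2-t+\tfrac16$ together with $\Delta^2=p^2-4q$ this simplifies to $\tfrac18p^2-\tfrac14q$; for $j\ge2$ one gets directly the stated formula $b_j=\tfrac{pB_j}{2j}+\tfrac{(-1)^{j+1}}{j(j+1)}\big(B_{j+1}(\tfrac14p+\tfrac14\Delta)+B_{j+1}(\tfrac14p-\tfrac14\Delta)-2B_{j+1}\big)$.

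Finally, to produce the explicit three-term expansion \eqref{Thm-lnApqbj}, I would evaluate $b_2$ and $b_3$ by plugging in $B_3(t)=t^3-\tfrac32t^2+\tfrac12t$ and $B_4(t)=t^4-2t^3+t^2-\tfrac1{30}$, exploiting the fact that the sums $B_{j+1}(\tfrac14p+\tfrac14\Delta)+B_{j+1}(\tfrac14p-\tfrac14\Delta)$ are symmetric in $\pm\Delta$ and hence polynomials in $p$ and $\Delta^2=p^2-4q$, which turns everything into polynomials in $p$ and $q$. The only real obstacle is bookkeeping: keeping the binomial expansions and the substitution $\Delta^2=p^2-4q$ straight so that the coefficients come out as the clean rational combinations displayed in \eqref{Thm-lnApqbj}; there is no conceptual difficulty, since the convergence/asymptotic validity is already guaranteed by the derivation of \eqref{Thm-lnApqProofref-with} and all that remains is the routine computation. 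As a consistency check one can verify that replacing $n+\tfrac12$ by $n+1$, $\tfrac12p$ by $p$ and $\tfrac14\Delta$ by $\tfrac12\Delta$ recovers the coefficients of Theorem~2.1 up to the evident rescaling, and that letting $n\to\infty$ reproduces \eqref{Thm-lnBpq}.
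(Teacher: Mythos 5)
Your proposal is correct and follows essentially the same route as the paper, which derives Theorem 2.2 by applying the general expansion \eqref{Thm-lnApqProofref-with} with $(\lambda,\mu,\nu)=\bigl(\tfrac12p,\ \tfrac14p+\tfrac14\Delta,\ \tfrac14p-\tfrac14\Delta\bigr)$ at $z=n+\tfrac12$ in \eqref{Rnrs}. You even correct a small misprint in the paper, which lists $\nu$ with a plus sign instead of $\tfrac14p-\tfrac14\Delta$.
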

The first two terms in the expansions \eqref{Thm-lnApqProofreNamelypq} and \eqref{Thm-lnApqbj} can be shown to agree with the expansions in inverse powers of $n$ obtained in \cite[Eqs.~(4.3), (4.4)]{Chen-Paris-1118627}.

\section{Asymptotic series expansions of the Wallis sequence}

Some inequalities and asymptotic formulas associated with the Wallis
sequence $W_n$ can be found in \cite{BES,Deng-Ban-Chen,Elezovic-Lin-Vuksic679--695,Hirschhorn194,Lampret328--339,Lampret775--787,
LinJIAfirst,Lin2014-251,Mortici489-495,Mortici085,Mortici717--722,Mortici929--936,Mortici803--815,
Mortici425--433,Paltanea34--38}.
For example, Elezovi\'c \textit{et al.} \cite{Elezovic-Lin-Vuksic679--695} showed that the following asymptotic expansion holds:
    \begin{equation}\label{Wn-expansion-Elezovic}
 W_{n}\sim\frac{\pi}{2}\bigg(1-\frac{\frac{1}{4}}{n+\frac{5}{8}} +\frac{\frac{3}{256}}{(n+\frac{5}{8})^3} +\frac{\frac{3}{2048}}{(n+\frac{5}{8})^4}
                 -\frac{\frac{51}{16384}}{(n+\frac{5}{8})^5} -\frac{\frac{75}{65536}}{(n+\frac{5}{8})^6} +\frac{\frac{2253}{1048576}}{(n+\frac{5}{8})^7} + \cdots\bigg)
\end{equation}
as $n\rightarrow\infty$. Deng \textit{et al.} \cite{Deng-Ban-Chen} proved that for all  $n \in\mathbb{N}$,
\begin{equation}\label{Hirschhorn-WnThm1}
\frac{\pi}{2}\left(1-\frac{1}{4n+\alpha}\right)< W_{n}\leq
\frac{\pi}{2}\left(1-\frac{1}{4n+\beta}\right)
\end{equation}
with the best possible constants
\begin{equation*}
\alpha=\frac{5}{2}\quad \text{and} \quad
\beta=\frac{32-9\pi}{3\pi-8}=2.614909986\ldots.
\end{equation*}
In fact, Elezovi\'c \textit{et al.} \cite{Elezovic-Lin-Vuksic679--695} have previously shown that $\frac{5}{2}$
is the best possible constant for a lower bound of $W_n$ of the type
$\frac{\pi}{2}\left(1-\frac{1}{4n+\alpha}\right)$. Moreover, the
authors pointed out that
\begin{equation*}
         W_n=\frac\pi2\left(1-\frac{1}{4n+\frac52}\right)+O\left(\frac1{n^3}\right)\qquad (n\to\infty).
    \end{equation*}

Here, we will establish two more accurate asymptotic expansions for $W_n$ (see Theorems \ref{Thm3-Expansion-Wn} and \ref{Thm4-Expansion-Wn-exp}) by making use of the fact that
    \begin{equation}\label{Wn-Gamma}
        W_n=\frac{\pi}{2}\cdot\frac{1}{n+\frac{1}{2}}  \left[ \frac{\Gamma(n+1)}{\Gamma(n+\frac{1}{2})} \right]^2  =\frac{\pi}{2}\cdot\frac{\Gamma(n+1)^2}  {\Gamma(n+\frac{1}{2})\Gamma(n+\frac{3}{2})}.
    \end{equation}

The following lemma is required in our present investigation.

\begin{lemma}[see \cite{Chen-Elezovic-Vuksic151--166}]  \label{lemma-power-exp}
    Let
 \begin{align*}
        A(x)\sim\sum_{n=1}^\infty a_nx^{-n}\qquad (x\to\infty)
 \end{align*}
    be a given asymptotical expansion. Then the composition $\exp(A(x))$ has
    asymptotic expansion of the following form
 \begin{align}\label{Expansion-coefficients-bj}
\exp(A(x))\sim\sum_{n=0}^\infty b_n x^{-n}\qquad (x\to\infty),
 \end{align}
    where
  \begin{align}
  b_0=1,\quad      b_n=\frac1n\sum_{k=1}^n k a_k b_{n-k}\qquad (n\ge1).
 \end{align}
\end{lemma}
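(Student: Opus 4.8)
The plan is to reduce the statement to a formal power-series identity and then upgrade it to a genuine asymptotic statement. Write $t=1/x$ and regard $A(x)\sim\sum_{n\ge1}a_nx^{-n}$ as a formal power series $\mathcal{A}\in\mathbb{C}[[t]]$ with zero constant term; let $\mathcal{B}=\exp(\mathcal{A})=\sum_{n\ge0}b_nt^n\in\mathbb{C}[[t]]$ be its formal exponential, which is well defined precisely because $\mathcal{A}$ has no constant term, and note $b_0=1$. Differentiating the identity $\mathcal{B}=\exp(\mathcal{A})$ formally in $t$ gives $\mathcal{B}'=\mathcal{A}'\mathcal{B}$, i.e. $\sum_{n\ge1}nb_nt^{n-1}=\bigl(\sum_{k\ge1}ka_kt^{k-1}\bigr)\bigl(\sum_{m\ge0}b_mt^m\bigr)$; comparing coefficients of $t^{n-1}$ yields $nb_n=\sum_{k=1}^{n}ka_kb_{n-k}$, which is the asserted recurrence. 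So the numbers $b_n$ defined by the recurrence are exactly the coefficients of the formal series $\exp(\mathcal{A})$.

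It remains to show that this formal series is the asymptotic expansion of the function $\exp(A(x))$. Fix $N\ge1$ and split $A(x)=P_N(x)+r_N(x)$ with $P_N(x)=\sum_{n=1}^{N}a_nx^{-n}$ and $r_N(x)=O(x^{-N-1})$ as $x\to\infty$. Then $\exp(A(x))=\exp(P_N(x))\exp(r_N(x))$. Since $P_N(x)\to0$, for large $x$ we may expand $\exp(P_N(x))$ as a convergent power series in $x^{-1}$, whose coefficients up to and including order $N$ coincide with the coefficients $b_0,\ldots,b_N$ of $\exp(\mathcal{A})$ (the $a_n$ with $n>N$ do not affect orders $\le N$, and neither does any truncation, in the sense of formal power series). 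Hence $\exp(P_N(x))=\sum_{n=0}^{N}b_nx^{-n}+O(x^{-N-1})$. Since $\exp(r_N(x))=1+O(x^{-N-1})$, multiplying out and absorbing error terms gives $\exp(A(x))=\sum_{n=0}^{N}b_nx^{-n}+O(x^{-N-1})$. As $N$ is arbitrary, this is precisely the assertion $\exp(A(x))\sim\sum_{n\ge0}b_nx^{-n}$.

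I expect the only real subtlety — the main obstacle — to lie in the second paragraph: the bookkeeping showing that the order-$\le N$ coefficients of the convergent expansion of $\exp(P_N(x))$ genuinely coincide with the formal coefficients $b_n$, so that the choices of truncation index produce one consistent family of coefficients. This is essentially the compatibility of composition of asymptotic power series with composition of formal power series, and once it is in place the rest is routine manipulation of $O(\cdot)$ terms. Alternatively, one may avoid the formal-algebra language and argue directly by induction on $N$ using $\frac{d}{dx}\exp(A(x))=A'(x)\exp(A(x))$ together with term-by-term differentiability of asymptotic power series; this route trades the combinatorial bookkeeping for a justification of differentiating the expansion, and arrives at the same recurrence.
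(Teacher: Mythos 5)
Your proof is correct. The paper itself offers no proof of this lemma --- it is simply quoted from \cite{Chen-Elezovic-Vuksic151--166} --- and your argument (the derivative identity $\mathcal{B}'=\mathcal{A}'\mathcal{B}$ yielding the recurrence $nb_n=\sum_{k=1}^n ka_kb_{n-k}$, followed by the truncation argument upgrading the formal identity to a genuine asymptotic statement) is precisely the standard proof given in that reference, with you being if anything more careful than usual about the passage from formal power series to asymptotic expansions.
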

\vspace{0.2cm}

From \eqref{asymptotic-Luke-P32}, we find as $n\to\infty$
\begin{equation}\label{asymptotic-Wn}
W_n\sim\frac{\pi}{2}\exp\left(\sum_{j=1}^{\infty}\frac{\nu_j}{n^j}\right),
\end{equation}
where the coefficients $\nu_j$ are given by
\begin{equation}\label{asymptoticWn-coefficient-nuj}
\nu_j=\frac{(-1)^{j+1}\Big(2B_{j+1}-B_{j+1}(\frac{1}{2})-B_{j+1}(\frac{3}{2})\Big)}{j(j+1)}\qquad (j\geq1).
\end{equation}

Noting that (see \cite[pp. 805--804]{abram})
\begin{align*}
B_n(1-x)=(-1)^{n}B_n(x),\quad (-1)^{n}B_n(-x)=B_n(x)+nx^{n-1}  \qquad (n\in\mathbb{N}_0)
\end{align*}
and
\begin{align*}
 B_n(\tfrac{1}{2})=-(1-2^{1-n})B_n  \qquad (n\in\mathbb{N}_0),
\end{align*}
 we find that \eqref{asymptoticWn-coefficient-nuj} can be written as
\begin{align}\label{asymptotic-Wn-coefficient-nuj}
\nu_j=\frac{(-1)^{j+1}\Big((4-2^{1-j})B_{j+1}-(j+1)\cdot2^{-j}\Big)}{j(j+1)}\qquad (j\geq1).
\end{align}

Thus, we obtain the expansion
\begin{align}
W_n&\sim\frac{\pi}{2}\exp\bigg(-\frac{1}{4n}+\frac{1}{8n^2}-\frac{5}{96n^3}+\frac{1}{64n^4}-\frac{1}{320n^5}+\frac{1}{384n^6}-\frac{25}{7168n^7}\nonumber\\
&\qquad\qquad\quad+\frac{1}{2048n^8}+\frac{29}{9216n^9}+\frac{1}{10240n^{10}}-\frac{695}{90112 n^{11}}+\cdots\bigg).\label{asymptotic-Wnnamely-exp}
\end{align}

By Lemma \ref{lemma-power-exp}, we then obtain from \eqref{asymptotic-Wn}
\begin{equation}\label{asymptotic-Wn-fromLemma-exp}
W_n\sim\frac{\pi}{2}\sum_{j=0}^{\infty}\frac{\mu_j}{n^j},
\end{equation}
where the coefficients $\mu_j$ are given by the recurrence relation
  \begin{equation}\label{expansion-Wn-muj}
  \mu_0=1,\quad      \mu_j=\frac{1}{j}\sum_{k=1}^j k \nu_k \mu_{j-k}\qquad (j\ge1).
 \end{equation}
and the  $\nu_j$ are given in \eqref{asymptotic-Wn-coefficient-nuj}. This produces the expansion in inverse powers of $n$ given by
\begin{align}
W_n&\sim\frac{\pi}{2}\bigg(1-\frac{1}{4n}+\frac{5}{32n^2}-\frac{11}{128n^3}+\frac{83}{2048n^4}-\frac{143}{8192n^5}+\frac{625}{65536n^6}-\frac{1843}{262144n^7}\hspace{1cm}\nonumber\\
&\qquad\quad+\frac{24323}{8388608n^8}+\frac{61477}{33554432n^9}
-\frac{14165}{268435456n^{10}}-\frac{8084893}{1073741824n^{11}}+\cdots\bigg)\hspace{1cm}\label{asymptotic-Wnnamely}
\end{align}
as $n\rightarrow\infty$.

\begin{theorem}\label{Thm3-Expansion-Wn}
The Wallis sequence has the following asymptotic expansion:
\begin{equation}\label{expansion-Wn-alphaj-betaj}
W_n\sim\frac{\pi}{2}\left(1+\sum_{\ell=1}^{\infty}\frac{\alpha_{\ell}}{(n+\beta_{\ell})^{2\ell-1}}\right) \qquad (n\to\infty),
\end{equation}
where $\alpha_\ell$ and $\beta_\ell$ are given by the pair of recurrence relations
\begin{equation}\label{Chen-expansion-WnCoefficients-alphaj}
\alpha_{\ell}=\mu_{2\ell-1}-\sum_{k=1}^{\ell-1}\alpha_{k}\beta_{k}^{2\ell-2k}\left(\!\!\begin{array}{c}2\ell-2\\2\ell-2k\end{array}\!\!\right)\qquad (\ell\geq2)
\end{equation}
and
\begin{equation}\label{Chen-expansion-WnCoefficients-betaj}
\beta_{\ell}=-\frac{1}{(2\ell-1)\alpha_{\ell}}\left\{\mu_{2\ell}+\sum_{k=1}^{\ell-1}\alpha_{k}\beta_{k}^{2\ell-2k+1}\left(\!\!\begin{array}{c}2\ell-1\\2\ell-2k+1\end{array}\!\!\right)\right\}\qquad (\ell\geq2),
\end{equation}
with $\alpha_1=-\frac{1}{4}$ and $\beta_1=\frac{5}{8}$. Here $\mu_j$
are given by the recurrence relation \eqref{expansion-Wn-muj}.
\end{theorem}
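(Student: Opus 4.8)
The plan is to start from the ordinary asymptotic power series
$W_n\sim\frac{\pi}{2}\sum_{j=0}^{\infty}\mu_j n^{-j}$ already established in
\eqref{asymptotic-Wn-fromLemma-exp}--\eqref{expansion-Wn-muj} (with $\mu_0=1$) and to
\emph{re-expand} it in the non-power asymptotic scale
$\{(n+\beta_\ell)^{-(2\ell-1)}\}_{\ell\ge1}$, choosing the constants $\alpha_\ell,\beta_\ell$ so
that the two expansions agree coefficient by coefficient in powers of $1/n$. The structural
observation that makes this work is that the $\ell$-th term $\alpha_\ell(n+\beta_\ell)^{-(2\ell-1)}$
first contributes at order $n^{-(2\ell-1)}$; hence matching at the odd orders $n^{-(2\ell-1)}$
pins down $\alpha_\ell$ and matching at the next even orders $n^{-2\ell}$ pins down $\beta_\ell$. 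Each
new matching equation introduces exactly one new unknown, so the system is triangular and
uniquely solvable — in particular there is no over-determination.

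Concretely, I would expand each shifted power by the binomial series,
\begin{equation*}
\frac{\alpha_\ell}{(n+\beta_\ell)^{2\ell-1}}
=\alpha_\ell\sum_{m=0}^{\infty}\binom{-(2\ell-1)}{m}\beta_\ell^m\,n^{-(2\ell-1+m)}
=\sum_{m=0}^{\infty}(-1)^m\alpha_\ell\binom{2\ell-2+m}{m}\beta_\ell^m\,n^{-(2\ell-1+m)},
\end{equation*}
and then collect the coefficient of $n^{-N}$ in $\sum_{\ell\ge1}\alpha_\ell(n+\beta_\ell)^{-(2\ell-1)}$:
writing $m=N-2\ell+1$ and using $\binom{2\ell-2+m}{m}=\binom{N-1}{N-2\ell+1}$, this coefficient equals
\begin{equation*}
\sum_{\ell=1}^{\lfloor (N+1)/2\rfloor}(-1)^{N-2\ell+1}\alpha_\ell\binom{N-1}{N-2\ell+1}\beta_\ell^{\,N-2\ell+1},
\end{equation*}
a \emph{finite} sum, so no convergence question arises. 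Setting this equal to $\mu_N$ and taking
$N=2\ell-1$ isolates the term $k=\ell$ (which is just $\alpha_\ell$) and, since $(-1)^{2\ell-2k}=1$,
produces the recurrence \eqref{Chen-expansion-WnCoefficients-alphaj}; taking $N=2\ell$ isolates the
term $k=\ell$ (which is $-(2\ell-1)\alpha_\ell\beta_\ell$) and, since $(-1)^{2\ell-2k+1}=-1$,
produces \eqref{Chen-expansion-WnCoefficients-betaj}. The base cases $N=1,2$ give
$\alpha_1=\mu_1=-\frac14$ and $\beta_1=-\mu_2/\alpha_1=\frac58$, consistent with \eqref{asymptotic-Wnnamely}.

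To upgrade this formal matching to a genuine asymptotic statement I would argue with partial sums:
put $S_L(n)=\frac{\pi}{2}\bigl(1+\sum_{\ell=1}^{L}\alpha_\ell(n+\beta_\ell)^{-(2\ell-1)}\bigr)$. By the
very construction of $\alpha_\ell,\beta_\ell$, the power-series expansion of $S_L(n)$ agrees with
$\frac{\pi}{2}\sum_{j=0}^{2L}\mu_j n^{-j}$ through the term $n^{-2L}$, so
$S_L(n)=\frac{\pi}{2}\sum_{j=0}^{2L}\mu_j n^{-j}+O(n^{-2L-1})$; subtracting this from the known
expansion of $W_n$ gives $W_n-S_L(n)=O(n^{-2L-1})=O\bigl((n+\beta_{L+1})^{-(2L+1)}\bigr)$, i.e. the
remainder is of the order of the first omitted term, which is exactly the meaning of
\eqref{expansion-Wn-alphaj-betaj}. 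The one delicate point — and the only real obstacle — is the
solvability of \eqref{Chen-expansion-WnCoefficients-betaj}, i.e. that $\alpha_\ell\neq0$ so that
division by $(2\ell-1)\alpha_\ell$ is legitimate; for any prescribed number of coefficients this is
verified directly (starting from $\alpha_1=-\frac14\neq0$), and in general the expansion
\eqref{expansion-Wn-alphaj-betaj} is to be understood as valid as long as the $\alpha_\ell$ generated by
\eqref{Chen-expansion-WnCoefficients-alphaj} do not vanish. Everything else is the routine binomial
bookkeeping sketched above.
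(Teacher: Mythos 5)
Your proposal is correct and follows essentially the same route as the paper: posit the ansatz, expand each $(n+\beta_\ell)^{-(2\ell-1)}$ by the binomial series, collect the (finite) coefficient of $n^{-N}$, and match against the $\mu_j$ at odd and even orders to obtain the two recurrences with the same base cases $\alpha_1=\mu_1=-\frac14$, $\beta_1=-\mu_2/\alpha_1=\frac58$. Your added remarks on the partial-sum interpretation of the expansion and on the need for $\alpha_\ell\neq0$ are sensible refinements that the paper's proof leaves implicit.
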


\begin{proof}
 Let
\begin{equation*}\label{expansion-Wn-alphaj-betaj}
W_n\sim\frac{\pi}{2}\left(1+\sum_{\ell=1}^{\infty}\frac{\alpha_{\ell}}{(n+\beta_{\ell})^{2\ell-1}}\right) \qquad (n\to\infty),
\end{equation*}
where $\alpha_\ell$ and $\beta_\ell$ are real numbers to be determined. This can be written as
\begin{equation}\label{expansion-Wn-alphaj-betajre}
\frac{2}{\pi}W_n\sim 1+\sum_{j=1}^{\infty}\frac{\alpha_j}{n^{2j-1}}\left(1+\frac{\beta_j}{n}\right)^{-2j+1}.
\end{equation}
Direct computation yields
\begin{align*}
\sum_{j=1}^{\infty}\frac{\alpha_j}{n^{2j-1}}\left(1+\frac{\beta_j}{n}\right)^{-2j+1}
&\sim\sum_{j=1}^{\infty}\frac{\alpha_j}{n^{2j-1}}\sum_{k=0}^{\infty}\left(\!\!\begin{array}{c}-2j+1\\k\end{array}\!\!\right)\frac{\beta_j^k}{n^k}\\
&\sim
\sum_{j=1}^{\infty}\frac{\alpha_j}{n^{2j-1}}\sum_{k=0}^{\infty}(-1)^{k}\left(\!\!\begin{array}{c}k+2j-2\\k\end{array}\!\!\right)\frac{\beta_j^k}{n^k}\\
&\sim
\sum_{j=1}^{\infty}\sum_{k=0}^{j-1}\alpha_{k+1}\beta_{k+1}^{j-k-1}(-1)^{j-k-1}\left(\!\!\begin{array}{c}j+k-1\\j-k-1\end{array}\!\!\right)\frac{1}{n^{j+k}},
\end{align*}
which can be written as
\begin{equation}\label{expansion-Wn-written}
\sum_{j=1}^{\infty}\frac{\alpha_j}{n^{2j-1}}\left(1+\frac{\beta_j}{n}\right)^{-2j+1}
\sim\sum_{j=1}^{\infty}\left\{\sum_{k=1}^{\lfloor
\frac{j+1}{2}\rfloor}\alpha_{k}\beta_{k}^{j-2k+1}(-1)^{j-1}\left(\!\!\begin{array}{c}j-1\\j-2k+1\end{array}\!\!\right)\right\}\frac{1}{n^j}.
\end{equation}
It then follows from \eqref{expansion-Wn-alphaj-betajre} and \eqref{expansion-Wn-written} that
\begin{equation}\label{expansion-Wn-writtenhave}
\frac{2}{\pi}W_n\sim1+
\sum_{j=1}^{\infty}\left\{\sum_{k=1}^{\lfloor
\frac{j+1}{2}\rfloor}\alpha_{k}\beta_{k}^{j-2k+1}(-1)^{j-1}\left(\!\!\begin{array}{c}j-1\\j-2k+1\end{array}\!\!\right)\right\}\frac{1}{n^j}.
\end{equation}

On the other hand, we have from \eqref{asymptotic-Wn-fromLemma-exp} that
\begin{equation}\label{asymptotic-Wn-fromLemma-expre}
\frac{2}{\pi}W_n\sim 1+\sum_{j=1}^{\infty}\frac{\mu_j}{n^j}.
\end{equation}
Equating coefficients of $n^{-j}$ on the right-hand sides of \eqref{expansion-Wn-writtenhave} and \eqref{asymptotic-Wn-fromLemma-expre}, we obtain
\begin{equation}\label{expansion-Wn-Equating-coefficients-muj}
\mu_j=\sum_{k=1}^{\lfloor
\frac{j+1}{2}\rfloor}\alpha_{k}\beta_{k}^{j-2k+1}(-1)^{j-1}\left(\!\!\begin{array}{c}j-1\\j-2k+1\end{array}\!\!\right)\qquad (j\in\mathbb{N}).
\end{equation}
Setting $j=2\ell-1$ and $j=2\ell$  in
\eqref{expansion-Wn-Equating-coefficients-muj}, respectively,
we find
\begin{equation}\label{expansion-Wn-Equating-coefficients-2ell-1}
\mu_{2\ell-1}=\sum_{k=1}^{\ell}\alpha_{k}\beta_{k}^{2\ell-2k}\left(\!\!\begin{array}{c}2\ell-2\\2\ell-2k\end{array}\!\!\right)
\end{equation}
and
\begin{equation}\label{expansion-Wn-Equating-coefficients-muj-2ell}
\mu_{2\ell}
=-\sum_{k=1}^{\ell}\alpha_{k}\beta_{k}^{2\ell-2k+1}\left(\!\!\begin{array}{c}2\ell-1\\2\ell-2k+1\end{array}\!\!\right).
\end{equation}

For $\ell=1$, we obtain from
\eqref{expansion-Wn-Equating-coefficients-2ell-1} and
\eqref{expansion-Wn-Equating-coefficients-muj-2ell}
\[\alpha_1=\mu_1=-\frac{1}{4}\quad\mbox{and}\quad \beta_1=-\frac{\mu_2}{\alpha_1}=\frac{5}{8},\]
and for $\ell\geq2$ we have
\begin{equation*}
\mu_{2\ell-1}=\sum_{k=1}^{\ell-1}\alpha_{k}\beta_{k}^{2\ell-2k}\left(\!\!\begin{array}{c}2\ell-2\\2\ell-2k\end{array}\!\!\right)+\alpha_{\ell}
\end{equation*}
and
\begin{equation*}
\mu_{2\ell}=-\sum_{k=1}^{\ell-1}\alpha_{k}\beta_{k}^{2\ell-2k+1}\left(\!\!\begin{array}{c}2\ell-1\\2\ell-2k+1\end{array}\!\!\right)-(2\ell-1)\alpha_{\ell}\beta_{\ell}.
\end{equation*}
We then obtain the recurrence relations
\eqref{Chen-expansion-WnCoefficients-alphaj} and
\eqref{Chen-expansion-WnCoefficients-betaj}.
The proof of Theorem \ref{Thm3-Expansion-Wn} is complete.
\end{proof}
\vspace{0.2cm}

We now give explicit numerical values
of the first few $\alpha_\ell$ and $\beta_\ell$ by using the recurrence relations
\eqref{Chen-expansion-WnCoefficients-alphaj} and
\eqref{Chen-expansion-WnCoefficients-betaj}. This demonstrates the ease with which
the constants $\alpha_\ell$ and $\beta_\ell$ in
\eqref{expansion-Wn-alphaj-betaj} can be determined. We find
\begin{align*}
&\alpha_1=-\frac{1}{4},\quad \beta_1=\frac{5}{8},\\
&\alpha_2=\mu_3-\alpha_1\beta_1^2=-\frac{11}{128}-\left(-\frac{1}{4}\right)\cdot\left(\frac{5}{8}\right)^2=\frac{3}{256},\\
&\beta_2=-\frac{\mu_4+\alpha_1\beta_1^3}{3\alpha_2}=-\frac{\frac{83}{2048}+\left(-\frac{1}{4}\right)\cdot\left(\frac{5}{8}\right)^3}{3\cdot\left(\frac{3}{256}\right)}=\frac{7}{12},\\
&\alpha_3=\mu_5-\alpha_1\beta_1^4-6\alpha_2\beta_2^2=-\frac{143}{8192}-\left(-\frac{1}{4}\right)\cdot\left(\frac{5}{8}\right)^4-6\cdot\left(\frac{3}{256}\right)\cdot\left(\frac{7}{12}\right)^2=-\frac{53}{16384},\\
&\beta_3=-\frac{\mu_6+\alpha_1\beta_1^5+10\alpha_2\beta_2^3}{5\alpha_3}=-\frac{\frac{625}{65536}+\left(-\frac{1}{4}\right)\cdot\left(\frac{5}{8}\right)^5+10\cdot\left(\frac{3}{256}\right)\cdot\left(\frac{7}{12}\right)^3}{5\cdot\left(-\frac{53}{16384}\right)}=\frac{2113}{3816}.
\end{align*}
Continuation of this procedure then enables the following coefficients to be derived:
\begin{eqnarray*}
&&\alpha_4=\frac{224573}{93782016},\quad
\beta_4=\frac{22119189899}{41134587264},\\
&&\alpha_5=-\frac{596297240983745796931}{176651089583152098705408},\quad \beta_5=\frac{38909478384301921254232134966821}{73585322683584986068354328660352}.
\end{eqnarray*}
We then obtain the following explicit asymptotic expansion:
\begin{align}
W_n&\sim\frac{\pi}{2}\bigg(1-\frac{\frac{1}{4}}{n+\frac{5}{8}}+\frac{\frac{3}{256}}{(n+\frac{7}{12})^3}-\frac{\frac{53}{16384}}{(n+\frac{2113}{3816})^5}\nonumber\\
&\qquad\quad+\frac{\frac{224573}{93782016}}{(n+\frac{22119189899}{41134587264})^7}-\frac{\frac{596297240983745796931}{176651089583152098705408}}{(n+\frac{38909478384301921254232134966821}{73585322683584986068354328660352})^9}+\cdots\bigg).\label{asymptotic-WnThm4}
\end{align}
Thus, we would appear to
obtain an alternating odd-type asymptotic expansion for $W_n$.
From a computational viewpoint,
(\ref{asymptotic-WnThm4}) is an improvement on the  formulas \eqref{asymptotic-Wnnamely} and
 \eqref{Wn-expansion-Elezovic}.

\begin{theorem}\label{Thm4-Expansion-Wn-exp}
The Wallis sequence has the following asymptotic expansion:
\begin{equation}\label{asymptotic-Wn-exp}
W_n\sim\frac{\pi}{2}\exp\bigg(\sum_{\ell=1}^{\infty}\frac{\omega_{\ell}}{(n+\frac{1}{2})^{2\ell-1}}\bigg)\qquad (n\to\infty)
\end{equation}
with the coefficients $\omega_\ell$  given by the recurrence relation
\begin{equation}\label{asymptotic-Wn-exp-coefficients-omega}
\omega_1=-\frac{1}{4}\quad\mbox{and}\quad \omega_{\ell}=\nu_{2\ell-1}-\sum_{k=1}^{\ell-1}\omega_{k}\left(\frac{1}{2}\right)^{2\ell-2k}\left(\!\!\begin{array}{c}2\ell-2\\2\ell-2k\end{array}\!\!\right)\qquad (\ell\geq2),
\end{equation}
where the $\nu_j$ are given in \eqref{asymptotic-Wn-coefficient-nuj}.
\end{theorem}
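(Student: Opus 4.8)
The plan is to mirror the proof of Theorem \ref{Thm3-Expansion-Wn}, but to carry out the coefficient matching at the level of the logarithm. I would start from the known expansion \eqref{asymptotic-Wn}, which after taking logarithms reads $\ln(\tfrac{2}{\pi}W_n)\sim\sum_{j\ge1}\nu_j/n^j$ with the $\nu_j$ of \eqref{asymptotic-Wn-coefficient-nuj}, and posit
\[
\ln\!\left(\frac{2}{\pi}W_n\right)\sim\sum_{\ell=1}^{\infty}\frac{\omega_\ell}{(n+\tfrac12)^{2\ell-1}},
\]
so that exponentiating yields \eqref{asymptotic-Wn-exp}. Writing $(n+\tfrac12)^{-(2\ell-1)}=n^{-(2\ell-1)}(1+\tfrac1{2n})^{-(2\ell-1)}$, expanding by the binomial theorem with $\binom{-(2\ell-1)}{k}=(-1)^{k}\binom{k+2\ell-2}{k}$, and collecting the coefficient of $n^{-j}$ exactly as in \eqref{expansion-Wn-written}--\eqref{expansion-Wn-Equating-coefficients-muj}, I would set this coefficient equal to $\nu_j$. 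For $j=2\ell-1$ the shift enters only through the even powers $(1/2)^{2\ell-2k}$, the diagonal term isolates $\omega_\ell$, and one is left precisely with the recurrence \eqref{asymptotic-Wn-exp-coefficients-omega}, with $\omega_1=\nu_1=-\tfrac14$.

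The point requiring care is that matching at the \emph{even} indices $j=2\ell$ produces a further family of relations among the $\omega_k$, and I must argue that these impose nothing new; otherwise only half of the coefficients would have been matched and the posited form would be unjustified. I would settle this using the closed form \eqref{Wn-Gamma}: with $x=n+\tfrac12$ one has $\tfrac{2}{\pi}W_n=\Gamma(x+\tfrac12)^2/\big(\Gamma(x)\Gamma(x+1)\big)$, hence $\ln(\tfrac{2}{\pi}W_n)=2\ln\Gamma(x+\tfrac12)-\ln\Gamma(x)-\ln\Gamma(x+1)$. Applying \eqref{asymptotic-Luke-P32} with $a=\tfrac12,0,1$ in the variable $x$, the contributions in $\ln x$, in $x$, and in $\tfrac12\ln(2\pi)$ cancel identically, and what remains is
\[
\ln\!\left(\frac{2}{\pi}W_n\right)\sim\sum_{m=1}^{\infty}\frac{(-1)^{m+1}\big(2B_{m+1}(\tfrac12)-B_{m+1}(0)-B_{m+1}(1)\big)}{m(m+1)}\,\frac1{x^{m}}.
\]
Since $B_{m+1}(1)=B_{m+1}(0)=B_{m+1}$ for $m\ge1$ (from $B_n(1-t)=(-1)^nB_n(t)$ together with the vanishing of odd-index Bernoulli numbers), and since $B_{m+1}=0$ whenever $m$ is even, every even-index coefficient is zero: the asymptotic expansion of $\ln(\tfrac{2}{\pi}W_n)$ in inverse powers of $n+\tfrac12$ is a pure odd series. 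By uniqueness of asymptotic expansions this is exactly the statement that the even-index relations are automatically fulfilled, so the $\omega_\ell$ generated by \eqref{asymptotic-Wn-exp-coefficients-omega} reproduce the whole expansion of $\ln(\tfrac{2}{\pi}W_n)$, and exponentiating gives \eqref{asymptotic-Wn-exp}.

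The main obstacle is thus the consistency of the even-index matching, and the decisive ingredient for it is the Bernoulli-number symmetry of $2\ln\Gamma(x+\tfrac12)-\ln\Gamma(x)-\ln\Gamma(x+1)$ about $x=n+\tfrac12$; everything else reduces to the binomial manipulations already used in \S3. As a sanity check one can confirm $\omega_1=-\tfrac14$, $\omega_2=\nu_3-\omega_1(\tfrac12)^2$, and verify that exponentiating the resulting odd series recovers the leading terms of \eqref{asymptotic-Wnnamely-exp}.
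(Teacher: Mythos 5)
Your proposal is correct, and its core mechanism is the same as the paper's: take logarithms, expand $(n+\tfrac12)^{-(2\ell-1)}=n^{-(2\ell-1)}(1+\tfrac1{2n})^{-(2\ell-1)}$ by the binomial theorem exactly as in \eqref{expansion-Wn-written}, and equate the coefficient of $n^{-j}$ with $\nu_j$; the odd indices $j=2\ell-1$ then give \eqref{asymptotic-Wn-exp-coefficients-omega}. Where you genuinely go beyond the paper is the consistency check. The paper simply posits the odd-only ansatz, reads off the recurrence from the odd-index equations, and relegates the even-index equations to a remark (as an ``alternative recurrence'' in terms of $\nu_{2\ell}$) without showing that the two determinations of $\omega_\ell$ agree --- which is precisely the point you flag as needing care. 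Your resolution via the closed form \eqref{Wn-Gamma}, writing $\tfrac{2}{\pi}W_n=\Gamma(x+\tfrac12)^2/\big(\Gamma(x)\Gamma(x+1)\big)$ with $x=n+\tfrac12$ and applying \eqref{asymptotic-Luke-P32} centred at $x$, is correct: the elementary terms cancel, $B_{m+1}(0)=B_{m+1}(1)$ for $m\ge1$, and both $B_{m+1}$ and $B_{m+1}(\tfrac12)$ vanish for even $m\ge2$, so the expansion of $\ln(\tfrac{2}{\pi}W_n)$ in powers of $(n+\tfrac12)^{-1}$ is purely odd. This makes the existence of the posited expansion a theorem rather than an assumption, and as a bonus it hands you the closed form
\begin{equation*}
\omega_\ell=\frac{\big(2^{2-2\ell}-4\big)B_{2\ell}}{2\ell(2\ell-1)},
\end{equation*}
which reproduces $\omega_1=-\tfrac14$, $\omega_2=\tfrac1{96}$, $\omega_3=-\tfrac1{320}$ of \eqref{asymptotic-Wn-expnamely} and could replace the recurrence altogether. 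In short: same computational skeleton, but your version supplies the justification the paper's proof leaves implicit.
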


\begin{proof}
 Let
\begin{equation*}
W_n\sim\frac{\pi}{2}\exp\bigg(\sum_{\ell=1}^{\infty}\frac{\omega_{\ell}}{(n+\frac{1}{2})^{2\ell-1}}\bigg)\qquad (n\to\infty),
\end{equation*}
where $\omega_\ell$  are real numbers to be determined. This can be written as
\begin{equation*}
\ln\left(\frac{2}{\pi}W_n\right)\sim \sum_{j=1}^{\infty}\frac{\omega_j}{n^{2j-1}}\left(1+\frac{1}{2n}\right)^{-2j+1}.
\end{equation*}
The choice $\beta_j=\frac{1}{2}$ in \eqref{expansion-Wn-written}, with $\alpha_j$  replaced by $\omega_j$, yields
\begin{equation*}
\sum_{j=1}^{\infty}\frac{\omega_j}{n^{2j-1}}\left(1+\frac{1}{2n}\right)^{-2j+1}
\sim\sum_{j=1}^{\infty}\left\{\sum_{k=1}^{\lfloor
\frac{j+1}{2}\rfloor}\omega_{k}\left(\frac{1}{2}\right)^{j-2k+1}(-1)^{j-1}\left(\!\!\begin{array}{c}j-1\\j-2k+1\end{array}\!\!\right)\right\}\frac{1}{n^j}.
\end{equation*}
We then obtain
\begin{equation}\label{expansion-Wn-obtain}
\ln\left(\frac{2}{\pi}W_n\right)\sim \sum_{j=1}^{\infty}\left\{\sum_{k=1}^{\lfloor
\frac{j+1}{2}\rfloor}\omega_{k}\left(\frac{1}{2}\right)^{j-2k+1}(-1)^{j-1}\left(\!\!\begin{array}{c}j-1\\j-2k+1\end{array}\!\!\right)\right\}\frac{1}{n^j}.
\end{equation}

On the other hand, we have from \eqref{asymptotic-Wn} that
\begin{equation}\label{asymptotic-Wnre}
\ln\left(\frac{2}{\pi}W_n\right)\sim\sum_{j=1}^{\infty}\frac{\nu_j}{n^j}.
\end{equation}
Equating coefficients of $n^{-j}$ on the right-hand sides of
\eqref{expansion-Wn-obtain} and \eqref{asymptotic-Wnre}, we obtain
\begin{equation}\label{expansion-Wn-Equating-coefficients}
\nu_j=\sum_{k=1}^{\lfloor
\frac{j+1}{2}\rfloor}\omega_{k}\left(\frac{1}{2}\right)^{j-2k+1}(-1)^{j-1}\left(\!\!\begin{array}{c}j-1\\j-2k+1\end{array}\!\!\right)\qquad (j\in\mathbb{N}).
\end{equation}
Setting $j=2\ell-1$  in \eqref{expansion-Wn-Equating-coefficients}, we find
\begin{equation}\label{Chen-expansion-Gamma-Equating-coefficients-2ell-1}
\nu_{2\ell-1}=\sum_{k=1}^{\ell}\omega_{k}\left(\frac{1}{2}\right)^{2\ell-2k}\left(\!\!\begin{array}{c}2\ell-2\\2\ell-2k\end{array}\!\!\right).
\end{equation}
Substitution of $\ell=1$ in
\eqref{Chen-expansion-Gamma-Equating-coefficients-2ell-1} yields $\omega_1=\nu_1=-\frac{1}{4}$, and for $\ell\geq2$ we have
\begin{equation*}
\nu_{2\ell-1}=\sum_{k=1}^{\ell-1}\omega_{k}\left(\frac{1}{2}\right)^{2\ell-2k}\left(\!\!\begin{array}{c}2\ell-2\\2\ell-2k\end{array}\!\!\right)+\omega_{\ell}.
\end{equation*}
We then obtain the recurrence relation \eqref{asymptotic-Wn-exp-coefficients-omega}. The proof of Theorem \ref{Thm4-Expansion-Wn-exp} is complete.
\end{proof}

\begin{remark}
Setting $j=2\ell$  in \eqref{expansion-Wn-Equating-coefficients}, we find
\begin{equation}\label{expansion-Wn-Equating-coefficients-2ell}
\nu_{2\ell}
=-\sum_{k=1}^{\ell}\omega_{k}\left(\frac{1}{2}\right)^{2\ell-2k+1}\left(\!\!\begin{array}{c}2\ell-1\\2\ell-2k+1\end{array}\!\!\right).
\end{equation}
For $\ell=1$ in
\eqref{expansion-Wn-Equating-coefficients-2ell} this yields $\omega_1=-2\nu_2=-\frac{1}{4}$, and for $\ell\geq2$ we have
\begin{equation*}
\nu_{2\ell}=-\sum_{k=1}^{\ell-1}\omega_{k}\left(\frac{1}{2}\right)^{2\ell-2k+1}\left(\!\!\begin{array}{c}2\ell-1\\2\ell-2k+1\end{array}\!\!\right)-(\ell-\frac{1}{2})\,\omega_{\ell}.
\end{equation*}
We then obtain the alternative recurrence relation for the coefficients $\omega_j$ in \eqref{asymptotic-Wn-exp} in terms of the even coefficients $\nu_j$:
\begin{equation}
\omega_1=-\frac{1}{4}\quad\mbox{and}\quad\omega_{\ell}=-\frac{2}{2\ell-1}\left\{\nu_{2\ell}+\sum_{k=1}^{\ell-1}\omega_{k}\left(\frac{1}{2}\right)^{2\ell-2k+1}\left(\!\!\begin{array}{c}2\ell-1\\2\ell-2k+1\end{array}\!\!\right)\right\}\qquad (\ell\geq2).
\end{equation}
\end{remark}
Hence, from \eqref{asymptotic-Wn-exp},  we obtain the following explicit asymptotic expansion:
\begin{equation}\label{asymptotic-Wn-expnamely}
W_n\sim\frac{\pi}{2}\exp\bigg(-\frac{\frac{1}{4}}{n+\frac{1}{2}}+\frac{\frac{1}{96}}{(n+\frac{1}{2})^3}-\frac{\frac{1}{320}}{(n+\frac{1}{2})^5}+\frac{\frac{17}{7168}}{(n+\frac{1}{2})^7}-\frac{\frac{31}{9216}}{(n+\frac{1}{2})^9}+\cdots\bigg).
\end{equation}
This would appear to be an alternating odd-type expansion for $W_n$.
From a computational viewpoint,
\eqref{asymptotic-Wn-expnamely} is an improvement on the  formulas \eqref{Thm-p=0-q--1/4} and
 \eqref{asymptotic-Wnnamely-exp}.


\end{CJK*}

\bigskip

\medskip

\bigskip
\medskip

\end{document}